\date{\today}
\def\nbh{neighborhood }
\def\ann{{\rm ann}}
\def\deg{\text{deg}\,}
\def\w{\wedge}
\def\dbar{\bar\partial}
\def\w{{\wedge}}
\def\D{{\mathcal D}}
\def\S{{\mathcal S}}
\def\CH{\mathcal{CH}}
\def\sHom{{\mathcal Hom\, }}
\def\codim{{\rm codim\,}}
\def\Ker{{\rm Ker\,  }}
\def\O{{\mathcal O}}
\def\L{{\mathcal L}}
\def\Re{{\rm Re\,  }}
\def\L{{\mathcal L}}
\def\U{{\mathcal U}}
\def\ann{{\rm ann\,}}
\def\Cu{{\mathcal C}}
\def\J{{\mathcal J}}
\def\be{\begin{equation}}
\def\ee{\end{equation}}
\newtheorem{thm}{Theorem}[section]
\newtheorem{lma}[thm]{Lemma}
\newtheorem{cor}[thm]{Corollary}
\newtheorem{prop}[thm]{Proposition}
\theoremstyle{definition}
\theoremstyle{remark}
\newtheorem{preremark}{Remark}
\newtheorem{preex}{Example}
\newenvironment{remark}{\begin{preremark}}{\qed\end{preremark}}
\newenvironment{ex}{\begin{preex}}{\qed\end{preex}}
\numberwithin{equation}{section}
\title[]{Uniqueness and factorization of  Coleff-Herrera currents}
\begin{document}

\date{\today}

\author{Mats Andersson}

\address{Department of Mathematics\\Chalmers University of Technology and the University of 
G\"oteborg\\S-412 96 G\"OTEBORG\\SWEDEN}

\email{matsa@math.chalmers.se}

\subjclass{32A27}

\thanks{The author was
  partially supported by the Swedish Natural Science
  Research Council}

\begin{abstract}
We prove a uniqueness result   for
Coleff-Herrera currents which in particular means that if
$f=(f_1,\ldots, f_m)$ defines a complete intersection, then
the classical Coleff-Herrera product associated to $f$
is the unique  Coleff-Herrera current 
that is cohomologous to $1$ with respect to the operator $\delta_f-\dbar$,
where $\delta_f$ is  interior multiplication with $f$.
From  the uniqueness result we  deduce that any 
Coleff-Herrera current on a variety $Z$ 
is a finite sum of products of  residue currents with support on
$Z$ and  holomorphic forms.   
\end{abstract}




\maketitle

\section{Introduction}

Let $X$ be an $n$-dimensional complex manifold and let $Z$ be an analytic
 variety of pure codimension $p$.
The sheaf of Coleff-Herrera currents  (or currents of {\it residual type})
$\CH_Z$   consists of all $\dbar$-closed $(*,p)$-currents
$\mu$ with support on $Z$ such that $\bar \psi\mu=0$ for each $\psi$
vanishing on $Z$, and which in addition
fulfills   the so-called standard extension property, SEP, see below.
Locally, any $\mu\in\CH_Z$ can be realized as a meromorphic
differential operator acting on the current of integration
$[Z]$ (combined with contractions with   holomorphic vector fields),
see, e.g., \cite{Bj1} and \cite{Bj2}. 

\smallskip
The model case of a Coleff-Herrera current is the Coleff-Herrera product
associated to a complete intersection $f=(f_1,\ldots,f_p)$,
\begin{equation}\label{chprod}
\mu^f=\big[\dbar \frac{1}{f_1}\w\ldots\w \dbar\frac{1}{f_p}\big],
\end{equation}
introduced by  Coleff and Herrera in \cite{CH}.
Equivalent definitions are given in \cite{P} and \cite{PT};
see also \cite{HS}. 
It was proved in  \cite{DS} and \cite{P} that the annihilator 
of $\mu^f$ is equal to the ideal $\J(f)$ generated by $f$.
 Notice that formally \eqref{chprod} is just
the pullback under $f$ of the product 
$\mu^w=\dbar(1/w_1)\w\ldots\w\dbar(1/w_p)$.
One can also express $\mu^w$ as $\dbar$ of the Bochner-Martinelli form 
$$
B(w)=\sum_j (-1)^j\bar w_j 
d\bar w_1\w \ldots \w d\bar w_{j-1}\w d\bar w_{j+1}\w\ldots \w d\bar w_p
/|w|^{2p}.
$$
In \cite{PTY}, $f^*B$ is defined as a principal value current,
and it is  proved that $\mu_{BM}^f=\dbar f^*B$ is indeed equal
to $\mu^f$. However the proof is quite involved. An alternative
but still  quite  technical proof appeared in \cite{A1}. 
In this paper we prove a uniqueness result  which states that any
Coleff-Herrera current that is cohomologous to $1$ with respect to
the operator $\delta_f-\dbar$ (see Section~\ref{badhus} for definitions) must be equal
to $\mu^f$. In particular this implies that  $\mu^f=\mu_{BM}^f$.

\smallskip
It is well-known  that any Coleff-Herrera current can be written
$\alpha\w\mu^f$, where $\alpha$ is a holomorphic $(*,0)$-form and
$\mu^f$ is a Coleff-Herrera product for a complete intersection
$f$. However, unless $Z$ is a complete intersection itself 
the support of $\mu^f$ is  larger than $Z$.  
Using the uniqueness result we can prove

\begin{thm}\label{bass}
For any $\mu\in\CH_Z$ (locally) there are residue currents $R_I$ with support on $Z$ and
holomorphic $(*,0)$-forms  $\alpha_I$ such that
\begin{equation}\label{ko}
\mu=\sum_{|I|=p}' R_I\w \alpha_I.
\end{equation}
\end{thm}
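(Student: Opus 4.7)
The plan is to work locally near a point of $Z$ and apply the uniqueness theorem to pass from an auxiliary complete intersection to the generators of $\J_Z$. Fix a neighborhood on which one can choose holomorphic functions $f=(f_1,\ldots,f_m)$ generating $\J_Z$. For each $p$-subset $I=\{i_1<\cdots<i_p\}$ of $\{1,\ldots,m\}$, let $R_I$ denote the residue current attached to $f_I=(f_{i_1},\ldots,f_{i_p})$, interpreted as the $Z$-supported part of $\mu^{f_I}$ (or, equivalently, as the appropriate component of the Bochner-Martinelli current $\dbar f^*B$ from the introduction). In either interpretation each $R_I$ is a Coleff-Herrera current with support on $Z$, and these are the building blocks I will use.

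By the standard structure theorem recalled in the introduction, locally $\mu=\alpha\w\mu^g$ for some complete intersection $g=(g_1,\ldots,g_p)$ and some holomorphic $(*,0)$-form $\alpha$. An SEP argument, exploiting the support of $\mu$ on $Z$, lets me assume $g_j\in\J_Z$: any component of $\{g=0\}$ off $Z$ would carry a part of $\mu^g$ that the factor $\alpha$ must annihilate, and such a vanishing can be absorbed into a redefinition of $g$ and $\alpha$. Writing $g=Af$ for some holomorphic $p\times m$ matrix $A$, the target becomes the transformation formula
\begin{equation*}
\mu^g=\sum_{|I|=p}{}' \det(A_I)\, R_I.
\end{equation*}

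I would prove this identity by applying the uniqueness theorem. The right-hand side lies in $\CH_{\{g=0\}}$: each $R_I$ is Coleff-Herrera on $Z\subset\{g=0\}$ and is annihilated by every $\bar f_i$, hence by every $\bar g_j=\sum_i \bar a_{ji}\bar f_i$, while the SEP is inherited from $Z$. By the uniqueness theorem it then suffices to exhibit the right-hand side as cohomologous to $1$ with respect to $\delta_g-\dbar$. To build the needed primitive I transport the known primitive of $1-R^f$ for $\delta_f-\dbar$ (the Bochner-Martinelli cochain for $f$) through the relation $\delta_g=\delta_f\circ A$, using a Cauchy-Binet expansion to convert the component of $R^f$ in degree $p$ into $\sum_I\det(A_I)R_I$. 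Wedging the resulting identity with $\alpha$ gives the desired decomposition $\mu=\sum_I R_I\w\alpha_I$ with $\alpha_I=\pm\det(A_I)\alpha$.

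The principal obstacle is the construction of the primitive in the transformation formula: one must check that the transported Bochner-Martinelli cochain is a well-defined current and that its image under $\delta_g-\dbar$ really reproduces $1-\sum_I\det(A_I)R_I$, which requires careful bookkeeping of signs and of the Cauchy-Binet combinatorics when $m>p$. A secondary, more technical subtlety is the SEP-based reduction to $g_j\in\J_Z$, where one must genuinely show that components of $\{g=0\}$ outside $Z$ can be absorbed into $\alpha$ without disturbing the Coleff-Herrera structure.
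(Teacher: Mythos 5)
Your proposal correctly identifies the engine of the paper's argument --- two currents in the Coleff--Herrera class of a pure-codimension-$p$ variety that are $\nabla$-cohomologous must coincide, by Theorem~\ref{unik} --- but the way you produce the second representative does not work, and the gap is fatal rather than technical. First, you fix the tuple $f$ to be a set of generators of $\J_Z$ and build the $R_I$ from that fixed tuple. Already for $Z=\{0\}\subset\C$, $f=(z)$, $\mu=\dbar(1/z^2)$ this fails: every current $\alpha\,\dbar(1/z)$ with $\alpha$ holomorphic is annihilated by $z$, while $\mu$ is not, so no decomposition \eqref{ko} with these building blocks exists. A general $\mu\in\CH_Z$ is annihilated only by some \emph{power} of $\J_Z$, and the tuple must be chosen accordingly; this is exactly why the paper takes $f_j=g_j^M$ with $M$ so large that $\J(f)\mu=0$. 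Second, the transformation formula $\mu^g=\sum_I'\det(A_I)R_I$ you aim at is false: the two sides have different supports ($Z^g$ is in general strictly larger than $Z$), and even in the square invertible case the determinant sits on the wrong side --- the correct law is $\mu^f=\det A\,\mu^g$ for $g=Af$, so for instance with $g=z^N=z^{N-1}\cdot z$ one gets $\det(A)\,\dbar(1/z)=z^{N-1}\dbar(1/z)=0$ while $\mu^g=\dbar(1/z^N)\neq 0$. Expressing $\mu^g$ in terms of the $R_I$ with holomorphic coefficients is the wrong direction and is impossible in general. A third, smaller issue is that your $R_I$ are not well defined: a subtuple $f_I$ need not be a complete intersection, so ``the $Z$-supported part of $\mu^{f_I}$'' has no meaning, and its claimed equivalence with a component of the full Bochner--Martinelli current is not a definition but a nontrivial assertion.

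The paper avoids the structure theorem $\mu=\alpha\w\mu^g$ and any transformation law altogether. Having arranged $\J(f)\mu=0$ by taking powers, one has $\nabla_f(\mu\w e)=0$, and the isomorphism \eqref{iso} (exactness of the double complex $\Cu_{0,k}(\Lambda^\ell E)$ in the $k$-direction) produces a \emph{holomorphic} $\xi\in\O(\Lambda^{m-p}E)$ with $\nabla_f V=\xi-\mu\w e$; comparing with $\nabla_f(U\w\xi)=\xi-R_p^f\w\xi$ and applying Theorem~\ref{unik} yields $\mu\w e=R_p^f\w\xi$, and the coefficients $\pm\xi_{I^c}$ are the $\alpha_I$. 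In other words, the holomorphic coefficients come out of the $\nabla_f$-cohomology, not out of Cauchy--Binet combinatorics. To salvage your route you would have to let the tuple depend on $\mu$, replace $\mu^g$ by its restriction to $Z$, and prove a comparison in the correct direction --- at which point you are essentially reconstructing Theorem~\ref{faktothm}.
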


Here $R_I$ are currents of 
 Bochner-Martinelli type from \cite{PTY} associated with a not  necessarily  
complete intersection. 
In particular, it follows that the Lelong current $[Z]$
admits a factorization  \eqref{ko}.

\smallskip

By the the uniqueness result  we also obtain  simple proofs of
the equivalence of various definitions of the SEP, as well as 
the equivalence of various conditions for the vanishing
of a Coleff-Herrera current.

\smallskip

We will  adopt the following definition of SEP:  {\it Given any holomorphic $h$ 
that does not vanish identically on any irreducible component of $Z$,
the function  $|h|^{2\lambda}\mu$,  a~priori defined
only for $\Re\lambda>>0$, has a current-valued analytic extension to $\Re\lambda>-\epsilon$,
and the value at $\lambda=0$ coincides with $\mu$.}
The reason  for this choice is merely practical;
for the equivalence to the classical definition, see Section~\ref{SEP}. 
Now, if $\mu\in\CH_Z$ has support on $Z\cap\{h=0\}$,  then
$|h|^{2\lambda}\mu$ must vanish if $\Re\lambda$ is large enough, and
by the uniqueness of analytic continuation thus $\mu=0$.
In particular, $\mu=0$ identically if $\mu=0$ on $Z_{reg}$.

\section{The Coleff-Herrera product}\label{bulle}

Let $f_1,\ldots, f_p$ define a complete intersection in $X$,
i.e., $\codim Z^f=p$, where $Z^f=\{f=0\}$.
Notice that \eqref{chprod} is elementarily defined if each
$f_j$ is a power of a coordinate function.
The general definition relies  on the possibility to resolve singularities:
By Hironaka's theorem we can locally find a resolution
$\pi\colon\tilde\U\to\U$ such that 
locally in $\tilde\U$, each $\pi^* f_j$ is a monomial times
a non-vanishing factor. It turns out  that  locally  $\mu^f$ is a 
sum of terms 
\begin{equation}\label{nt}
\sum_\ell \pi_*\tau_\ell
\end{equation}
where each $\tau_\ell$ is of the form
$$
\tau_\ell=\dbar \frac{1}{t_1^{a_1}}\w\ldots\w
\dbar\frac{1}{t_p^{a_p}}\w\frac{\alpha}{t_{p+1}^{a_{p+1}}\cdots t_r^{a_r}},
$$
$t$ is a suitable local coordinate system in $\tilde \U$, and $\alpha$ is a smooth function
with compact support. It is well-known that  $\mu^f$ is in $\CH_{Z^f}$
but for further reference we sketch a proof:
It follows immediately from the definition that $\mu^f$ is a 
$\dbar$-closed $(0,p)$-current
with support on $Z^f$.
Given any holomorphic function $\psi$ we may choose the
resolution so that also $\pi^*\psi$ is a monomial.
Notice that each $|\pi^*\psi|^{2\lambda}\tau_\ell$ has an analytic continuation
to $\lambda=0$ and that the value at $0$ is equal to $\tau_\ell$ if
none of  $t_1,\ldots, t_p$ is a factor in $\pi^*\psi$ and 
zero otherwise. According to this let us subdivide the set of $\tau_\ell$ into
two groups $\tau_\ell'$ and $\tau_\ell''$.
Notice that 
$|\psi|^{2\lambda}\mu^f=\sum_\ell\pi_*(|\pi^*\psi|^{2\lambda}\tau_\ell)$
admits an  analytic continutation and that the value at $\lambda=0$ 
is $\sum\pi_*\tau''_\ell$.
If  $\psi=0$ on $Z^f$, then 
$0=|\psi|^{2\lambda}\mu^f$,  and hence $\mu^f=\sum_\ell\pi_*\tau_\ell'$;
it now follows  that $\bar\psi\mu^f=d\bar\psi\w\mu^f=0$.
If $h$ is holomorphic and the zero set of $h$ intersects
$Z^f$ properly, then $T=\mu^f-|h|^{2\lambda}\mu^f|_{\lambda=0}$
is a current of the  type \eqref{nt} with  support
on $Y=Z^f\cap\{h=0\}$ that has codimension $p+1$. For the same reason as
 above, $d\bar \psi\w T=0$ for each holomorphic $\psi$ that vanishes
on $Y$ and by a standard argument it now follows that  $T=0$
for degree reasons. Thus $\mu^f$ has the SEP and so  $\mu^f\in\CH_{Z^f}$.
This  proof  is  inspired by a forthcoming joint paper, \cite{AW2},
with  Elizabeth Wulcan. 

\section{The uniqueness result}\label{badhus}

Let $f=(f_1,\ldots,f_m)$ be a holomorphic tuple on  some complex manifold $X$. 
It is  practical to introduce a (trivial) vector bundle $E\to X$ with
global frame $e_1,\ldots,e_m$  and consider $f=\sum f_j e_j^*$ as a section of the dual
bundle $E^*$, where $e_j^*$ is the dual frame.
Then $f$  induces a mapping $\delta_f$, interior multiplication with $f$, 
on the exterior algebra $\Lambda E$.  
Let $\Cu_{0,k}(\Lambda^\ell E)$ be  the sheaf   of  
$(0,k)$-currents  with values in $\Lambda^\ell E$, considered as 
as sections of the bundle $\Lambda(E\oplus T^*(X))$; thus 
a section of  $\Cu_{0,k}(\Lambda^\ell E)$ is given by  an  expression
$v=\sum'_{|I|=\ell} f_I\w e_I$
where $f_I$ are  $(0,k)$-currents  and  $d \bar z_j\w e_k
=-e_k\w d\bar z_j$ etc.
Notice that both $\dbar$ and $\delta_f$ act as anti-derivations on these spaces,
i.e.,
$\dbar (f\w g)=\dbar f\w g+(-1)^{\deg f} f\w\dbar g, $
if at least one of $f$ and $g$ is smooth, and similarly for $\delta_f$.
It is  straight forward to check that $\delta_f \dbar=-\dbar\delta_f$.
Therefore, if  $\L^k=\oplus_j \Cu_{0,j-k}(\Lambda^j E)$
and $\nabla_f=\delta_f-\dbar$,  then   $\nabla_f\colon\L^k\to\L^{k+1}$,
and $\nabla_f^2=0$. For example, $v\in\L^{-1}$ is of the form
$v=v_1+\cdots +v_m$, where $v_k$ is a $(0,k-1)$-current  with values in
$\Lambda^k E$. Also for a general current 
the subscript will denote degree in $\Lambda E$.

\begin{ex}[The Coleff-Herrera product]\label{chex}
Let $f=(f_1,\ldots, f_m)$ be a complete intersection in  $X$. 
The current 
\begin{multline}\label{vdef}
V=\Big[\frac{1}{f_1}\Big] e_1+\Big[\frac{1}{f_2}\dbar\frac{1}{f_1}\Big]
\wedge e_1\wedge e_2 + \\
\Big[\frac{1}{f_3}\dbar\frac{1}{f_2}\wedge\dbar\frac{1}{f_1}\Big]
\wedge e_1\wedge e_2\wedge e_3 +\cdots
\end{multline}
is in $\L^{-1}$ and solves $\nabla_f V=1-\mu^f\w e,$
where $\mu^f$ is the Coleff-Herrera product and
$e=e_1\wedge\ldots\wedge e_m$. For definition of the
coefficients of $V$ and the computational rules used here,
see \cite{P}; one can obtain a simple proof of these rules
by arguing as in Section~\ref{bulle}, see  \cite{AW2}.
\end{ex}

\begin{ex}[Residues of Bochner-Martinelli type]\label{bm}
Introduce a Hermitian metric on $E$ and let $\sigma$ be the section of $E$ over
$X\setminus Z^f$ with minimal pointwise norm such that $\delta_f\sigma=f\cdot\sigma=1$.
Then 
$$
u=\frac{\sigma}{\nabla_f\sigma}=\sigma+\sigma\w\dbar\sigma+\sigma\w(\dbar\sigma)^2+\cdots
$$
is smooth outside $Z^f$ and $\nabla_f u=1$ there.
It turns out, see \cite{A1}, that  $U$ has a natural current extension $U$ across $Z^f$, and 
if $p=\codim Z^f$, then 
$\nabla_f U=1-R^f,$ where
$
R^f=R^f_p+\cdots+ R^f_{m}.
$
Moreover, these currents have representations like \eqref{nt} so if  
$\xi\in\O(\Lambda^{m-p}E)$ and $\xi\w R^f_p$ is $\dbar$-closed, then
it is in $\CH_Z^f$ by the arguments  given in Section~\ref{bulle}.
Notice that
\begin{equation}\label{kanna}
R_k^f=\sum'_{|I|=k} R^f_I\w e_{I_1}\w\ldots\w e_{I_k}.
\end{equation}
If we choose the trivial metric, the coefficients $R^f_I$ are precisely the
currents introduced in \cite{PTY}. In particular, if $f$ is a complete intersection,
i.e., $m=p$, then
$R^f_{1,\ldots,p}=\mu^f_{BM}\w e.$
\end{ex}

\begin{thm}[Uniqueness for Coleff-Herrera currents]\label{unik}
Assume that $Z^f$ has pure codimension $p$. If $\tau\in\CH_{Z^f}$ and
there is a solution $V\in\L^{p-m-1}$ to $\nabla_f V=\tau\w e$, then
$\tau=0$.
\end{thm}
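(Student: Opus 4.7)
My plan is to use the standard extension property (SEP) to localize to a generic smooth point of $Z^f$, change the frame on $E$ to reduce locally to a coordinate complete intersection, and then finish by a direct analysis of the Koszul--Dolbeault chain together with an analytic-continuation argument.

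First, by the observation at the end of the introduction, a Coleff-Herrera current vanishing on $Z^f_{\mathrm{reg}}$ vanishes identically. It suffices, then, to show $\tau = 0$ in a neighborhood of an arbitrary smooth point $x_0\in Z^f_{\mathrm{reg}}$ at which the Jacobian of $f$ has maximal rank $p$; a further SEP argument with a suitable $h$ then handles the exceptional locus where the rank drops. Near such an $x_0$ choose coordinates $(z,w)=(z_1,\dots,z_p,w_1,\dots,w_{n-p})$ with $Z^f=\{z=0\}$. Since every $f_j$ vanishes on $Z^f$, we have $f_j = \sum_{k=1}^p a_{jk}(z,w)\,z_k$ with the $m\times p$ matrix $(a_{jk})$ of rank $p$, and a holomorphic row reduction on $(a_{jk})$ induces a change of frame on $E$ after which $f$ takes the form $(z_1,\dots,z_p,0,\dots,0)$. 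In this frame $\delta_f$ annihilates the last $m-p$ frame vectors, so writing $V = \sum_J V^J\wedge e_J''$ in the $\Lambda\,\mathrm{span}(e_{p+1}',\dots,e_m')$-directions and extracting the top-$e''$ component of $\nabla_fV=\tau\wedge e$ reduces the problem to the complete-intersection equation
\[
\tau\wedge e' = \nabla_{f'}\,V',\qquad f' = (z_1,\dots,z_p),\ e' = e_1'\wedge\cdots\wedge e_p'.
\]

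For the complete-intersection case, split $V' = \sum_{j=1}^p V_j'$ with $V_j'\in\Cu_{0,j-1}(\Lambda^j E')$. The top Koszul--Dolbeault equation gives $V_p' = v_p\,e'$ with $\dbar v_p = -\tau$; the lower ones form a chain that forces $z_iv_p$ to be $\dbar$-exact for each $i=1,\dots,p$. Now multiply $\nabla_{f'}V'=\tau\wedge e'$ by $|z|^{2\lambda}$; since $\tau$ is supported on $\{z=0\}$, $|z|^{2\lambda}\tau = 0$ for $\Re\lambda\gg 0$, so $\nabla_{f'}(|z|^{2\lambda}V')=-\dbar|z|^{2\lambda}\wedge V'$. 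Both sides admit analytic continuations to a neighborhood of $\lambda=0$ via Hironaka as in Section~\ref{bulle}. Comparing the residue identity at $\lambda=0$ with the CH annihilation $\bar z_i\tau=0$ (which gives $d\bar z_i\wedge v_p=\dbar(\bar z_iv_p)$) and the intermediate chain equations should force all polar contributions of $v_p$ along $\{z=0\}$ to cancel in the top $\Lambda^pE'$-component, yielding $\tau=0$.

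The main technical obstacle I anticipate is the final residue analysis. The base case $p=1$ is transparent: $V_1'=v_1\,e_1'$ with $v_1z_1=0$ forces $\supp v_1\subset\{z_1=0\}$, whence $\bar z_1v_1=0$ and $d\bar z_1\wedge v_1=\dbar(\bar z_1v_1)=0$, giving $v_1=0$ at smooth points where $d\bar z_1\ne 0$. For $p>1$ one iterates this idea along the Koszul--Dolbeault chain, but the polar-order bookkeeping on the Hironaka resolution is where the real work lies.
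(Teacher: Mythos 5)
Your reduction to the normal form $f=(z_1,\dots,z_p,0,\dots,0)$ is the step that fails. Writing $f_j=\sum_k a_{jk}(z,w)z_k$ is fine, but the matrix $(a_{jk})$ need not have rank $p$ at \emph{any} point of $Z^f$: already for $f=(z_1^2)$ on $\C^2$ one has $a_{11}=z_1$, which vanishes identically on $Z^f=\{z_1=0\}$, and $df_1=2z_1dz_1$ vanishes on all of $Z^f$. So the locus where your row reduction is possible can be empty, and then no SEP argument with an auxiliary $h$ can recover the general statement from the special case. (The rank of $(df_j)$ along $Z^f$ is a property of the chosen generators, not of the variety, and the theorem is stated for an arbitrary tuple with $\codim Z^f=p$.) Even in the genuine model case $f'=(z_1,\dots,z_p)$ your argument is incomplete: for $p=1$ the implication $\supp v_1\subset\{z_1=0\}\Rightarrow\bar z_1v_1=0$ is false --- a distribution killed by $z_1$ may still contain $\bar z_1$-derivatives of the delta current along $\{z_1=0\}$, and $\bar z_1\,\partial_{\bar z_1}\delta=-\delta\neq 0$ --- so the base case already requires using the full structure of $\tau$ as a Coleff-Herrera current rather than just support information; and for $p>1$ you explicitly leave the residue bookkeeping open, which is precisely where the content would have to be.

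The paper's proof takes a different and much shorter route that never puts $f$ in normal form. Outside $Z^f$ there is a smooth $u$ with $\nabla_f u=1$ (the section of Example~\ref{bm}); for any neighborhood $\omega$ of $Z^f$ and a cutoff $\chi$ one forms $g=\chi-\dbar\chi\w u$, a smooth $\nabla_f$-closed form supported in $\omega$ whose scalar part is $1$ near $Z^f$. Then $\nabla_f(g\w V)=\tau\w e$, so the top-degree coefficient of $g\w V$ is a $\dbar$-potential of $\tau$ supported in $\omega$; since $\omega$ is arbitrary, Lemma~\ref{stod} gives $\tau=0$. The local expansion \eqref{galt} of a Coleff-Herrera current on $Z_{reg}$ and an integration in the normal directions are confined to that lemma. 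If you want to salvage your approach, what is missing is exactly a substitute for this mechanism: some way to convert the hypothesis $\nabla_fV=\tau\w e$ into data localized arbitrarily close to $Z^f$ without assuming anything about the shape of the tuple $f$.
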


\begin{remark} If $Z^f$ does not have pure codimension, the  theorem
still holds (with the same proof)  with $\CH_{Z^f}$  replaced by
$\CH_{Z'}$, where $Z'$ is the irreducible components of $Z^f$ 
of maximal dimension.
\end{remark}

In view of Examples~\ref{chex} and \ref{bm} we get

\begin{cor}
Assume  that $f$ is a complete intersection.  If $\mu\in\CH_{Z^f}$ and
there is a current $U\in\L^{-1}$ such that 
$\nabla_f U=1-\mu\w e$, then $\mu$ is equal to the Coleff-Herrera product
$\mu^f$. In particular, $\mu_{BM}^f=\mu^f$.
\end{cor}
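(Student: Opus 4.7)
The plan is to reduce the corollary to the uniqueness statement Theorem~\ref{unik} by subtracting off the Coleff--Herrera solution from Example~\ref{chex}. Since $f$ is a complete intersection we have $m=p$, and the current $V$ defined in \eqref{vdef} lies in $\L^{-1}=\L^{p-m-1}$ and satisfies $\nabla_f V = 1 - \mu^f\w e$. Combining with the hypothesis $\nabla_f U = 1 - \mu\w e$, I would form $W = U - V \in \L^{-1}$ and compute
\begin{equation*}
\nabla_f W \;=\; \nabla_f U - \nabla_f V \;=\; (\mu^f - \mu)\w e.
\end{equation*}

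Next I would observe that $\tau:=\mu^f-\mu$ is again in $\CH_{Z^f}$, because $\mu\in\CH_{Z^f}$ by assumption and $\mu^f\in\CH_{Z^f}$ by the discussion in Section~\ref{bulle}, and the sheaf $\CH_{Z^f}$ is closed under addition. Since $m=p$, the hypothesis on the degree in Theorem~\ref{unik} ($V\in\L^{p-m-1}=\L^{-1}$) is met by $W$, so Theorem~\ref{unik} applies to $\tau$ and $W$ and gives $\tau=0$, i.e.\ $\mu=\mu^f$. This proves the first assertion of the corollary.

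For the second assertion I would invoke Example~\ref{bm} in the complete intersection case $m=p$. There a current $U=U^{BM}$ is produced with $\nabla_f U = 1 - R^f$, where in this case $R^f$ reduces to its top component $R^f_p$; by \eqref{kanna} and the final identification in Example~\ref{bm},
\begin{equation*}
R^f_p \;=\; R^f_{1,\ldots,p}\w e \;=\; \mu^f_{BM}\w e.
\end{equation*}
Moreover, Example~\ref{bm} asserts (via the resolution-of-singularities representations analogous to \eqref{nt}) that $\mu^f_{BM}\in\CH_{Z^f}$; the hypothesis "$\xi\w R^f_p$ is $\dbar$-closed" holds with $\xi=1\in\O(\Lambda^{m-p}E)=\O(\C)$ because $\nabla_f^2 U = 0$ forces $\dbar R^f_p = 0$ in this top degree. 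Thus the first assertion applies with $\mu=\mu^f_{BM}$ and yields $\mu^f_{BM}=\mu^f$.

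The work is essentially bookkeeping: the one place I would double-check is the degree count $V\in\L^{p-m-1}$ and the claim that $\mu^f_{BM}\in\CH_{Z^f}$, since the whole point of the corollary is to give an effortless proof of an identity whose direct proof in \cite{PTY},\cite{A1} is technical. There is no genuine obstacle once Theorem~\ref{unik} is in hand; the corollary is a two-line consequence via the exactness trick $U-V$.
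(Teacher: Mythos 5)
Your proposal is correct and is precisely the argument the paper intends: the paper derives the corollary "in view of Examples~\ref{chex} and \ref{bm}", i.e.\ by subtracting the Coleff--Herrera solution $V$ of \eqref{vdef} from $U$ and applying Theorem~\ref{unik} to $\mu^f-\mu$, then specializing $U$ to the Bochner--Martinelli current. Your degree check $\L^{p-m-1}=\L^{-1}$ and the verification that $\dbar R^f_p=0$ (hence $\mu^f_{BM}\in\CH_{Z^f}$) fill in exactly the bookkeeping the paper leaves implicit.
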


The proof of Theorem~\ref{unik}   relies  on the following lemma, which
is probably known. However, for the reader's convenience we 
include an outline of a proof.

\begin{lma}\label{stod} 
If  $\mu$ is in $\CH_Z$ and for each \nbh\  $\omega$ of $Z$ there is 
a current $V$ with support in $\omega$ such that $\dbar V=\mu$, then
$\mu=0$.
\end{lma}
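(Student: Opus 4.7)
\textbf{Step 1: Reduce to $Z_{\text{reg}}$.} By the SEP, if $\mu$ vanishes on $X\setminus Z_{\text{sing}}$ then $\supp\mu\subset Z_{\text{sing}}\subset\{h=0\}$ for some holomorphic $h$ not identically zero on any irreducible component of $Z$, so $|h|^{2\lambda}\mu\equiv 0$ for $\Re\lambda$ large, and analytic continuation at $\lambda=0$ gives $\mu=|h|^{2\lambda}\mu|_{\lambda=0}=0$. It therefore suffices to prove $\mu=0$ near an arbitrary regular point of $Z$.

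\textbf{Step 2: Local reduction of the test form.} Choose coordinates $(z,w)\in\C^p\times\C^{n-p}$ near a regular point so that $Z=\{z=0\}$, and let $\phi$ be a test form of bidegree $(n,n-p)$ supported in the chart. The annihilator property gives $\bar z_j\mu=0$, and then $0=\dbar(\bar z_j\mu)=d\bar z_j\wedge\mu$ for $j=1,\ldots,p$. Combined with the bidegree constraint, this forces $\mu\wedge\phi$ to depend only on the coefficient of $dz\wedge dw\wedge d\bar w$ in $\phi$, and (via $\bar z_j\mu=0$) only on its $\bar z$-independent part in the Taylor expansion at $z=0$. Since $\mu$, being a current supported on a submanifold, has locally finite order in the normal direction, one may further truncate the Taylor expansion in $z$ at some finite order $N$ without altering $\langle\mu,\phi\rangle$. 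The resulting representative is
\[
\tilde\phi_0=\sum_{|\alpha|\leq N}\frac{z^\alpha}{\alpha!}\,\psi_\alpha(w,\bar w)\,dz\wedge dw\wedge d\bar w,
\]
which satisfies $\langle\mu,\phi\rangle=\langle\mu,\tilde\phi_0\rangle$, and is $\dbar$-closed since $z^\alpha$ is holomorphic while $\dbar\psi_\alpha(w,\bar w)\wedge d\bar w=0$ because all antiholomorphic $w$-differentials are already present.

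\textbf{Step 3: Conclude via the hypothesis.} Take a cutoff $\chi(z)$ equal to $1$ on $\{|z|<\epsilon\}$ and supported in $\{|z|<2\epsilon\}$; then $\chi\tilde\phi_0$ is a compactly supported test form with $\langle\mu,\phi\rangle=\langle\mu,\chi\tilde\phi_0\rangle$. Apply the hypothesis with a neighborhood $\omega$ of $Z$ small enough that $\omega\cap\supp\phi\subset\{|z|<\epsilon\}$, to obtain a current $V$ with $\supp V\subset\omega$ and $\dbar V=\mu$. Then $\supp V\cap\supp\dbar\chi=\emptyset$, and using $\dbar\tilde\phi_0=0$,
\[
\langle\mu,\phi\rangle=\langle\dbar V,\chi\tilde\phi_0\rangle=\pm\langle V,\dbar\chi\wedge\tilde\phi_0\rangle=0.
\]
As $\phi$ is arbitrary, $\mu=0$ on each chart covering $Z_{\text{reg}}$, hence on $Z_{\text{reg}}$, and by Step 1 on all of $X$. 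The delicate part of the argument is Step 2, which rests on the finite normal order of Coleff-Herrera currents on $Z_{\text{reg}}$ and on the fact that the annihilator condition kills everything carrying a $\bar z_j$ or $d\bar z_j$ factor; everything else is formal.
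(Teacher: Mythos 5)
Your proof is correct and follows essentially the same route as the paper's: reduction to $Z_{reg}$ via the SEP, a transversal Taylor expansion exploiting $\bar z_j\mu=0$, $d\bar z_j\wedge\mu=0$ and the finite order of $\mu$ on $Z_{reg}$, and then a Stokes argument in the normal direction using that $V$ can be taken with support arbitrarily close to $Z$. The only (cosmetic) difference is that you run the final step dually, replacing the test form by a $\dbar$-closed finite-jet representative and pairing it with $\dbar V$, whereas the paper writes $\mu=\sum_\alpha a_\alpha\,\dbar(1/w_1^{\alpha_1+1})\wedge\ldots\wedge\dbar(1/w_p^{\alpha_p+1})$ and integrates $\gamma\wedge dw^\beta$ in the normal variables; these are the same computation.
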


\begin{proof}
Locally on $Z_{reg}$ we
can choose coordinates $(z,w)$ such that $Z=\{w=0\}$.  Since $\bar w_j\mu=0$
and $\dbar \mu=0$ it follows that $d\bar w_j\w\mu=0$, $j=1,\ldots,p$, and hence
$\mu=\mu_0d\bar w_1\w\ldots\w d\bar w_p$.  
From a Taylor expansion in $w$ we get  that 
\begin{equation}\label{galt}
\mu=\sum_{|\alpha|\le M-p}
a_\alpha(z)\dbar\frac{1}{w_1^{\alpha_1+1}}\w\ldots\w \dbar\frac{1}{w_p^{\alpha_p+1}},
\end{equation}
where $a_\alpha$ are the push-forwards of  
$\mu\w w^\alpha dw/(2\pi i)^p$  under the projection $(z,w)\mapsto z$.
Since $\mu$ is $\dbar$-closed it follows that $a_\alpha$ are holomorphic.
Notice that 
$$
\dbar\frac{1}{w_1^{\beta_1}}\w\ldots\w \dbar\frac{1}{w_p^{\beta_p}}
\w dw_1^{\beta_1}\w\ldots\w dw_p^{\beta_p}/(2\pi i)^p=\beta_1\cdots\beta_p[w=0],
$$
where $[w=0]$ denote the current of integration over $Z_{reg}$.
Now assume that $\dbar\gamma=\mu$ and $\gamma$ has support close to $Z$.
We have, for $|\beta|=M$,  that 
$$
\dbar(\gamma\w dw^\beta)=(2\pi i)^pa_{\beta-1}(z)\beta_1\cdots\beta_p[w=0].
$$
If $\nu$ is the component of $\gamma\w dw^\beta$ of bidegree $(p,p-1)$
in $w$, thus
$$
d_w\nu=\dbar_w\nu=(2\pi i)^pa_{\beta-1}\beta_1\cdots\beta_p[w=0].
$$
Integrating with respect to $w$ we get that $a_{\beta-1}(z)=0$. By finite 
induction we can conclude
that $\mu=0$. Thus $\mu$ vanishes on $Z_{reg}$ and by the SEP  it 
follows that $\mu=0$.
\end{proof}

\begin{proof}[Proof of Theorem~\ref{unik}]
Let $\omega$ be any \nbh of $Z$ and take a cutoff function $\chi$   that is
$1$ in a \nbh of $Z$ and with support in $\omega$. 
Let $u$ be any smooth solution to $\nabla_f u=1$ in $X\setminus Z^f$, cf., Example~\ref{bm}.
Then
$
g=\chi-\dbar\chi\w u
$
is a smooth form in $\L^0(\omega)$ and $\nabla_f g=0$. Moreover, the scalar term
$g_0$ is $1$ in a \nbh of $Z^f$. Therefore,
$$
\nabla_f[ g\w V]=g\w \tau\w e= g_0 \tau \w e=\tau\w e,
$$
and hence  the current coefficient $W$ of the top degree component of $g\w V$ 
is a solution to $\dbar W=\tau$ with support in $\omega$.
In view of Lemma~\ref{stod} we have that  $\tau=0$.
\end{proof}

\section{The factorization}

The double sheaf complex 
$\Cu_{0,k}(\Lambda^\ell E)$ is exact in the $k$ direction  except at $k=0$,
where  we have the cohomology $\O(\Lambda^\ell E)$. By a standard argument 
there are  natural isomorphisms 
\begin{equation}\label{iso}
\Ker_{\delta_f}\O(\Lambda^\ell E)/
\delta_f\O(\Lambda^{\ell+1})\simeq \Ker_{\nabla_f}\L^{-\ell}/ \nabla_f\L^{-\ell-1}.
\end{equation}
When $\ell=0$ the left hand side is $\O/\J(f)$,
where $\J(f)$ is the ideal sheaf generated by $f$.
We have the following  factorization result.

\begin{thm}\label{faktothm}
Assume that $Z^f$ has pure codimension $p$ and let $\mu\in\CH_{Z^f}$
be $(0,p)$ and 
such that $\J(f)\mu=0$. Then there is locally $\xi\in\O(\Lambda^{m-p}E)$
such that  
\begin{equation}\label{fakto}
\mu\w e=R_p^f\w\xi.
\end{equation}
\end{thm}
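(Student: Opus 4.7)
The plan is to extract a candidate $\xi$ via the cohomological isomorphism \eqref{iso} and verify the factorization \eqref{fakto} using Theorem \ref{unik}. The first step is to observe that $\mu\wedge e$ sits in $\L^{p-m}$ and is $\nabla_f$-closed: $\dbar(\mu\wedge e)=0$ is immediate from $\dbar\mu=0$, while $\delta_f(\mu\wedge e)$ reduces (up to sign) to $\mu\wedge\delta_f e = \sum_j(-1)^{j-1} f_j\,\mu\wedge e_1\wedge\cdots\wedge\hat e_j\wedge\cdots\wedge e_m$, which vanishes thanks to $\J(f)\mu = 0$. The isomorphism \eqref{iso} with $\ell=m-p$ then produces, locally, a $\delta_f$-closed holomorphic section $\xi\in\O(\Lambda^{m-p}E)$ together with a current $V_0\in\L^{p-m-1}$ such that $\mu\wedge e - \xi = \nabla_f V_0$. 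This $\xi$ is the intended factor in \eqref{fakto}.

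The second step uses the homotopy $\nabla_f U = 1 - R^f$ from Example \ref{bm}. Since $\xi$ is holomorphic and $\delta_f$-closed, $\nabla_f\xi=0$, whence
$$
\nabla_f(U\wedge\xi) = (1-R^f)\wedge\xi = \xi - R^f\wedge\xi.
$$
The total $\Lambda E$-degree of $R_k^f\wedge\xi$ is $k+(m-p)$, so $R_k^f\wedge\xi = 0$ for all $k>p$ and $R^f\wedge\xi = R_p^f\wedge\xi$. Adding this identity to the one from step one gives
$$
\mu\wedge e - R_p^f\wedge\xi = \nabla_f\bigl(V_0 + U\wedge\xi\bigr),\qquad V_0 + U\wedge\xi\in\L^{p-m-1}.
$$

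The left-hand side takes values in $\Lambda^m E$, so it equals $\tau\wedge e$ for a scalar $(0,p)$-current $\tau$; writing $R_p^f\wedge\xi = \tilde\mu\wedge e$, one has $\tau = \mu - \tilde\mu$. Reading off the top-$\Lambda E$-degree component of $\nabla_f(R^f\wedge\xi)=0$ gives $\dbar(R_p^f\wedge\xi)=0$, so by the representation discussed in Example \ref{bm} we get $\tilde\mu\in\CH_{Z^f}$, and hence $\tau\in\CH_{Z^f}$. Theorem \ref{unik} then forces $\tau=0$, i.e., $\mu\wedge e = R_p^f\wedge\xi$, which is \eqref{fakto}.

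The main obstacle I anticipate is the invocation of \eqref{iso} in step one: producing the $\delta_f$-closed representative $\xi$ in precisely the correct $\Lambda E$-degree is really a double-complex diagram chase combining Dolbeault exactness with the algebra of $\delta_f$, and the indexing conventions for $\L^\bullet$ must be tracked carefully. Once $\xi$ is available, the rest of the argument is a direct formal manipulation built on the $(1-R^f)$-homotopy and the already-established uniqueness theorem.
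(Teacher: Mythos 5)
Your proposal is correct and follows essentially the same route as the paper: extract $\xi$ from the isomorphism \eqref{iso}, combine with the homotopy $\nabla_f(U\w\xi)=\xi-R_p^f\w\xi$ from Example~\ref{bm}, and apply Theorem~\ref{unik} to the difference. You have merely made explicit two points the paper leaves implicit --- that $R_k^f\w\xi=0$ for $k>p$ by degree reasons, and that $R_p^f\w\xi$ lies in $\CH_{Z^f}$ (via its $\dbar$-closedness and the representation noted in Example~\ref{bm}) so that Theorem~\ref{unik} indeed applies --- both of which are correct and worth spelling out.
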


\begin{proof}
Since $\nabla_f(\mu\w e)=0$, by \eqref{iso}  there is $\xi\in\O(\Lambda^{m-p}E)$ such that 
$\nabla_f V=\xi-\mu\w e$. On the other hand, if $U$ is the current from
Example~\ref{bm}, then $\nabla_f (U\w\xi)=\xi-R^f\w\xi=\xi- R^f_p\w\xi$.
Now \eqref{fakto} follows from Theorem~\ref{unik}.
\end{proof}

\begin{proof}[Proof of Theorem~\ref{bass}] 
With no loss of generality we may assume that $\mu$ has bidegree $(0,p)$.
Let $g=(g_1,\ldots,g_m)$ be a tuple such that $Z^g=Z$. If $f_j=g_j^M$ and $M$ is large
enough, then $\J(f)\mu=0$ and hence by Theorem~\ref{faktothm} there is a form
$$
\xi=\sum'_{|J|=m-p}\xi_J\w e_J
$$
such that \eqref{fakto} holds. 
Then, cf., \eqref{kanna},   \eqref{ko} holds if 
$\alpha_I=\pm \xi_{I^c}$, where
$I^c=\{1,\ldots,m\}\setminus I$.
\end{proof}

\begin{ex} \label{bamse1}
Let  $[Z]$ be any variety of pure codimension and choose
$f$ such that $Z=Z^f$. It is not hard to prove  that 
(each term of) the Lelong current $[Z]$ is in $\CH_{Z}$, and hence
there is a holomorphic form $\xi$ such that $R^f_p\w\xi=[Z]\w e$.
(In fact, one can notice that the proof of Lemma~\ref{stod}
works for $\mu=[Z]$ just as well, and then one can
obtain \eqref{fakto} for $[Z]$ in the same way as for $\mu\in\CH_Z$. 
A~posteriori it follows that indeed $[Z]$ is in $\CH_Z$.)
There are natural ways to regularize the current $R_p^f$, see, e.g., 
\cite{HS},  and thus we get natural regularizations of
$[Z]$.
\end{ex}

Next we  recall the duality principle, \cite{DS}, \cite{P}:
If $f$ is a complete intersection, then 
\begin{equation}\label{dual}
\ann \mu^f=\J(f).
\end{equation}
In fact, if $\phi\in \ann \mu$, then $\nabla_f U\phi=\phi-\phi\mu\w e=\phi$
and hence $\phi\in \J(f)$ by \eqref{iso}.
Conversely, if $\phi\in \J(f)$, then there is a holomorphic $\psi$ such that
$\phi=\delta_f\psi=\nabla_f\psi$ and hence $\phi\mu=\nabla_f\psi\w \mu=
\nabla(\psi\w\mu)=0$.  

Notice that  $\sHom_{\O}(\O/\J(f),\CH_{Z^f}(\Lambda^pE))$ is  the sheaf of currents
$\mu\w e$ with $\mu\in\CH_{Z^f}$  that are annihilated by $\J(f)$.
From \eqref{dual} and Theorem~\ref{faktothm} we now get

\begin{thm}\label{oslo}
If $f$ is a complete intersection, then the sheaf mapping
\begin{equation}
\O/\J(f) \to \sHom_{\O}(\O/\J(f),\CH_Z(\Lambda^pE)),\quad   \phi\mapsto \phi\mu^f\w e,
\end{equation}
is an isomorphism.
\end{thm}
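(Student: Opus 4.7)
The plan is to verify well-definedness on the quotient, injectivity, and surjectivity, each reducing directly to an earlier result in the paper.

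First I would check well-definedness and injectivity together, using the duality principle \eqref{dual}. For any holomorphic $\phi$, the current $\phi\mu^f\w e$ lies in $\CH_{Z^f}(\Lambda^pE)$ since $\CH_Z$ is closed under multiplication by holomorphic functions; the duality principle $\ann\mu^f=\J(f)$ then shows both that $\J(f)\cdot(\phi\mu^f)=0$, so $\phi\mu^f\w e$ truly represents an element of $\sHom_\O(\O/\J(f),\CH_Z(\Lambda^pE))$, and that $\phi\in\J(f)$ forces $\phi\mu^f=0$, so the assignment descends to the quotient. Injectivity is the converse: if $\phi\mu^f\w e=0$ then $\phi\mu^f=0$, hence $\phi\in\ann\mu^f=\J(f)$.

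For surjectivity I would invoke Theorem~\ref{faktothm}. An element of the target is, by the remark preceding that theorem, represented by some $\mu\in\CH_{Z^f}$ of bidegree $(0,p)$ with $\J(f)\mu=0$. Because $f$ is a complete intersection, $m=p$, so the form $\xi\in\O(\Lambda^{m-p}E)=\O(\Lambda^0E)=\O$ delivered by Theorem~\ref{faktothm} is merely a scalar holomorphic function. Moreover in this case \eqref{kanna} collapses to $R_p^f=\mu^f_{BM}\w e$, which equals $\mu^f\w e$ by the corollary following Theorem~\ref{unik}. Consequently the factorization $\mu\w e=R_p^f\w\xi$ reads $\mu\w e=\xi\mu^f\w e$, realizing $\mu\w e$ as the image of the class of $\xi$ in $\O/\J(f)$.

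The only step deserving any thought is the surjectivity argument, namely the observation that in the complete intersection case the factor $\xi$ from Theorem~\ref{faktothm} lies in $\O(\Lambda^0E)=\O$ rather than in a higher-degree $E$-valued space, and that $R_p^f$ coincides with $\mu^f\w e$. Neither point is deep, so no genuine obstacle arises once Theorem~\ref{faktothm} and the duality principle are in hand; the theorem is essentially a bookkeeping consequence of those two inputs.
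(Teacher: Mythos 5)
Your proposal is correct and follows exactly the route the paper intends: the paper derives Theorem~\ref{oslo} in one line from the duality principle \eqref{dual} (well-definedness and injectivity) together with Theorem~\ref{faktothm} (surjectivity, with $m=p$ so that $\xi$ is a scalar holomorphic function and $R_p^f=\mu^f\w e$). You have simply spelled out the bookkeeping the paper leaves implicit.
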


\section{The standard extension property}\label{SEP}

Given the other conditions in the definition of $\CH_Z$ the
SEP  is automatically fulfilled on $Z_{reg}$; this is
easily seen, e.g.,  as in the proof of Lemma~\ref{stod},
so the interesting case is when the zero set $Y$ of $h$ contains the singular locus of $Z$.
Classically the SEP is expressed as
\begin{equation}\label{sepclass}
\lim_{\epsilon\to 0}\chi(|h|/\epsilon)\mu=\mu,
\end{equation}
where   $Y\supset Z_{sing}$ and $h$ is not vanishing identically on any
irreducible component of $Z$.  Here $\chi(t)$  can be either the characteristic function
for the interval $[1,\infty)$ or some smooth approximand.

\begin{prop}
Let $\chi$ be a fixed function as above.
The class of $\dbar$-closed $(0,p)$-currents $\mu$ with support on $Z$ that are annihilated
by $\bar I_Z$ and satisfy \eqref{sepclass} coincides with our class $\CH_Z$.
\end{prop}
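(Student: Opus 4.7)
The plan is to reduce both versions of SEP to the same model calculation on a resolution of singularities, following the template of Section~\ref{bulle} and the proof of Lemma~\ref{stod}.

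Fix $\mu$ satisfying the common hypotheses---$\dbar$-closed, bidegree $(0,p)$, support in $Z$, annihilated by $\bar I_Z$---and $h$ as in the statement. I would work locally and use Hironaka to produce a proper modification $\pi:\tilde U\to U$ such that $\pi^*h$ is, in every chart, a monomial $t^b u$ with $u$ nonvanishing, and such that the ideal of $Z$ has been principalized. By the local structure theorem for residual currents of Bj\"ork (cited in the introduction), $\mu$ admits in each chart a representation $\mu=\sum_\ell\pi_*\tau_\ell$ with each $\tau_\ell$ of the model form \eqref{nt}.

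Next I would do the model one-variable calculation. For each single factor,
\[
|t_j|^{2b_j\lambda}\dbar\frac{1}{t_j^{a_j}}\bigg|_{\lambda=0}
= \lim_{\epsilon\to 0}\chi(|t_j|^{b_j}/\epsilon)\dbar\frac{1}{t_j^{a_j}}
=\begin{cases}\dbar(1/t_j^{a_j}),& b_j=0,\\ 0,& b_j>0,\end{cases}
\]
the vanishing in the $b_j>0$ case being immediate for both regularizations (for the analytic continuation, $|t_j|^{2b_j\lambda}\dbar(1/t_j^{a_j})$ is identically $0$ on $\Re\lambda>0$, since the point-mass at $t_j=0$ is multiplied by $|0|^{2b_j\lambda}=0$; for the cutoff, $\chi(|t_j|^{b_j}/\epsilon)$ vanishes in a full neighborhood of $t_j=0$, which contains the support of $\dbar(1/t_j^{a_j})$). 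Iterating over all factors of $\tau_\ell$ (the smooth and meromorphic factors in the $t_k$ with $k>p$ being handled by standard principal-value arguments) gives that both regularizations agree on each $\tau_\ell$, and hence, after push-forward by the proper map $\pi$, agree on $\mu$.

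The two versions of SEP then become literally the same statement---that this common regularized value equals $\mu$---so they are equivalent. The main obstacle I anticipate is the appeal to Bj\"ork's structure theorem under the weakest common hypotheses (without either form of SEP): one must either check that Bj\"ork's proof goes through under a condition of ``no mass on lower-dimensional subvarieties'' that is weaker than, and implied by, either SEP, or else obtain the local monomial representation $\mu=\sum_\ell\pi_*\tau_\ell$ directly by a chart-by-chart Taylor expansion on the resolution, patterned exactly on the proof of Lemma~\ref{stod}.
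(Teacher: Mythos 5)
Your model computation on a resolution---that the analytic continuation at $\lambda=0$ and the cutoff limit $\chi(|h|/\epsilon)\to 1$ produce the same value on a monomial pushforward $\pi_*\tau_\ell$ once $\pi^*h$ is a monomial sharing no factor with the residue factors of $\tau_\ell$---is exactly the observation the paper uses. The gap is in how you propose to get such a representation for $\mu$ in the first place, and it is a real one, as you half-suspect. Under the common hypotheses alone ($\dbar$-closed, supported on $Z$, killed by $\bar I_Z$, \emph{no} SEP of either kind) no structure theorem of residual type can hold: by the Dickenstein--Sessa decomposition quoted in the paper's final remark, that class also contains terms $\dbar\gamma$ with $\gamma$ supported on $Z$, which are not of the form $\sum_\ell\pi_*\tau_\ell$. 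Your fallback---a Taylor expansion patterned on Lemma~\ref{stod}---only yields the representation \eqref{galt} locally on $Z_{reg}$, where (as the paper notes at the start of Section~\ref{SEP}) both versions of the SEP are automatic; the entire content of the proposition sits over $Z_{sing}$, precisely where that argument gives nothing. Invoking Bj\"ork's structure theorem for each class separately could in principle close the loop, but that replaces the proposition by a considerably deeper external result and still requires translating ``meromorphic differential operator acting on $[Z]$'' into the monomial form \eqref{nt}.

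The paper avoids the structure theorem entirely. It needs the monomial representation only for the single explicit current $R_p^f$ (where it comes for free from the construction in Section~\ref{bulle} and Example~\ref{bm}), checks by your same model computation that $R_p^f$ lies in both classes, and then transfers this to an arbitrary $\mu$ via the factorization $\mu\w e=R_p^f\w\xi$ of Theorem~\ref{faktothm}. The point that makes this work in both directions is that Lemma~\ref{stod} and Theorem~\ref{unik} use the SEP only through the implication ``$\mu=0$ on $Z_{reg}$ implies $\mu=0$,'' which holds verbatim under either definition; hence the factorization is available for $\CH_Z$ and for $\CH_Z^{cl}$ separately, and each inclusion follows by reading off the SEP of the other kind from $R_p^f\w\xi$. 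If you want to salvage your plan, this factorization step is the missing ingredient that substitutes for the structure theorem.
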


If  $\chi$ is not smooth the existence of the currents $\chi(|h|/\epsilon)\mu$ 
in a reasonable  sense for small $\epsilon>0$ is part of the statement.

\begin{proof}[Sketch of proof]
Let $f$ be a tuple such that  $Z=Z^f$. 
We first show that $R^f_p$  satisfies \eqref{sepclass}. 
From the arguments in Section~\ref{bulle}, cf.,  Example~\ref{bm},    we know  that
$R^f_p$  has a representation \eqref{nt} such that $\pi^*h$ is a pure monomial
(since the possible nonvanishing factor  can be incorporated in one of
the coordinates)
and none of the  factors in   $\pi^*h$ occurs
among the residue factors in  $\tau_\ell$. Therefore,  the existence of the product in
\eqref{sepclass} and the equality follow from the simple observation that 
\begin{equation}
\int_{s_1,\ldots,s_\mu}\chi(|s_1^{c_1}\cdots s^{c_\mu}_\mu|/\epsilon)\frac{\psi(s)}
{s_1^{\gamma_1}\cdots s_\mu^{\gamma_\mu}}\to
\int_{s_1,\ldots,s_\mu}
\frac{\psi(s)}
{s_1^{\gamma_1}\cdots s_\mu^{\gamma_\mu}}
\end{equation}
for test forms $\psi$, where the right hand side is a principal value integral.
Let temporarily $\CH_Z^{cl}$ denote the class of currents defined in the proposition.
Since each $\mu\in\CH_Z$ admits the representation \eqref{fakto} it follows
that $\mu\in \CH_Z^{cl}$. On the other hand, Lemma~\ref{stod} and
therefore Theorem~\ref{unik} and 
\eqref{fakto} hold for $\CH_Z^{cl}$ as well (with the same proofs), 
and thus we get the other inclusion.
\end{proof}

\section{Vanishing of  Coleff-Herrera currents}

We conclude with some equivalent condition for the vanishing of a
Coleff-Herrera current. This result   is proved by the ideas above, 
it should be well-known, but we have not
seen it in this way in the literature.

\begin{thm} Assume that $X$ is Stein and that the subvariety
$Z\subset X$ has pure codimension
$p$. If $\mu\in\CH_Z(X)$ and $\dbar V=\mu$ in $X$, then 
the  following are equivalent:

\noindent (i)  $\mu=0$.

\noindent (ii) For all $\psi\in\D_{n,n-p}(X)$ such that $\dbar\psi=0$ in some
\nbh of $Z$ we have that  
$$
\int V \w\dbar \psi=0.
$$

\noindent (iii) There is a solution to $\dbar w=V$ in $X\setminus Z$.

\noindent (iv) For each neighborhood $\omega$ of $Z$ there is a solution to
$\dbar w=V$ in  $X\setminus\bar \omega$.
\end{thm}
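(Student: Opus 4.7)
The plan is to close the circle $(\mathrm{i}) \Rightarrow (\mathrm{iii}) \Rightarrow (\mathrm{iv}) \Rightarrow (\mathrm{i})$ with Stein methods and the cutoff construction behind Lemma~\ref{stod}, and to handle $(\mathrm{ii})$ via the easy $(\mathrm{i}) \Rightarrow (\mathrm{ii})$ together with the harder $(\mathrm{ii}) \Rightarrow (\mathrm{iii})$, where Serre duality does the work.

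The three easy arrows go quickly. For $(\mathrm{i}) \Rightarrow (\mathrm{iii})$, if $\mu = 0$ then $V$ is $\dbar$-closed on the Stein manifold $X$, so a global $w$ with $\dbar w = V$ exists and its restriction to $X \setminus Z$ solves $(\mathrm{iii})$. The step $(\mathrm{iii}) \Rightarrow (\mathrm{iv})$ is the trivial restriction from $X \setminus Z$ to the smaller open set $X \setminus \bar\omega$. For $(\mathrm{i}) \Rightarrow (\mathrm{ii})$, integration by parts on compactly supported $\psi$ gives $\int V \wedge \dbar \psi = \pm \int \dbar V \wedge \psi = 0$ since $\dbar V = \mu = 0$.

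For $(\mathrm{iv}) \Rightarrow (\mathrm{i})$ I would reuse the cutoff argument from the proof of Theorem~\ref{unik}. Given any neighborhood $\omega_0$ of $Z$, choose an intermediate $\omega$ with $Z \subset \omega \subset \bar\omega \subset \omega_0$, apply $(\mathrm{iv})$ to obtain $w$ with $\dbar w = V$ on $X \setminus \bar\omega$, and pick a smooth cutoff $\chi \equiv 1$ on a neighborhood of $\bar\omega$ with $\supp \chi \subset \omega_0$. Then $(1-\chi)w$ extends by zero to a current on $X$, and
\[
V_{\omega_0} \;:=\; V - \dbar\bigl((1-\chi)w\bigr)
\]
satisfies $\dbar V_{\omega_0} = \mu$ with $\supp V_{\omega_0} \subset \supp \chi \subset \omega_0$, because on $X \setminus \supp \chi$ one has $(1-\chi)w = w$ and hence $V_{\omega_0} = V - \dbar w = 0$. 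Since $\omega_0$ was arbitrary, Lemma~\ref{stod} forces $\mu = 0$.

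The principal obstacle is $(\mathrm{ii}) \Rightarrow (\mathrm{iii})$. Given a $\dbar$-closed $\eta \in \D_{n, n-p+1}(X \setminus Z)$, extend it by zero to $X$; the extension is smooth, compactly supported, $\dbar$-closed, and vanishes in a neighborhood of $Z$. Since $X$ is Stein, dualizing the vanishing $H^{0, p-1}(X) = 0$ yields $H^{n, n-p+1}_c(X) = 0$, so $\eta = \dbar \psi$ for some $\psi \in \D_{n, n-p}(X)$; since $\dbar \psi = \eta$ vanishes near $Z$, $\psi$ is an admissible test form for $(\mathrm{ii})$. Hypothesis $(\mathrm{ii})$ then gives $\int V \wedge \eta = \int V \wedge \dbar \psi = 0$, so $V|_{X \setminus Z}$ annihilates every $\dbar$-closed $(n, n-p+1)$-form with compact support in $X \setminus Z$. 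Serre duality on $X \setminus Z$ then yields that $V$ is $\dbar$-exact there, proving $(\mathrm{iii})$. The main technical point is the Hausdorffness of $H^{0, p-1}(X \setminus Z)$ needed for Serre duality to apply, which I would justify by invoking the Andreotti-Grauert theory for complements of pure-codimensional analytic subsets of Stein manifolds.
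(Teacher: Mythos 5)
Your three easy arrows and your proof of (iv)$\Rightarrow$(i) are correct and essentially identical to the paper's: extending $w$ across the hole produces a solution of $\dbar U=\mu$ with support in an arbitrarily small neighborhood of $Z$, and Lemma~\ref{stod} then gives $\mu=0$. The divergence, and the problem, is in how you handle (ii). The paper does not route (ii) through (iii) at all; it proves (ii)$\Rightarrow$(i) directly and elementarily, by inserting the local normal form \eqref{galt} of $\mu$ on $Z_{reg}$ and testing against $\psi(z)\chi(w)\,dw^\beta\wedge dz\wedge d\bar z$ (which is $\dbar$-closed near $Z$) to conclude $a_\beta=0$ by finite downward induction on $|\beta|$, and then invoking the SEP. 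No duality theory is needed.

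Your (ii)$\Rightarrow$(iii) via Serre duality has two genuine gaps. First, the vanishing $H^{n,n-p+1}_c(X)=0$ is dual to $H^{0,p-1}(X)=0$, which is false for $p=1$ since $H^{0,0}(X)=\O(X)\neq 0$; so in the hypersurface case you cannot write every compactly supported $\dbar$-closed $(n,n)$-form on $X\setminus Z$ as $\dbar\psi$ with $\psi\in\D_{n,n-1}(X)$, and the reduction to admissible test forms collapses. Second, and more seriously, the non-degeneracy of the Serre pairing on $X\setminus Z$ requires the separation of $H^{0,p-1}(X\setminus Z)$. For $p\geq 2$ this group is isomorphic to the local cohomology $H^p_Z(X,\O)$, i.e., essentially to $\CH_Z(X)$ itself (by Dickenstein--Sessa/Bj\"ork), so you are detecting the vanishing of $\mu$ by a pairing whose well-posedness already encodes fine structure of exactly these currents. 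Its Hausdorffness is a nontrivial separation theorem for the first nonvanishing local cohomology (Siu, Trautmann), not something delivered by Andreotti--Grauert theory: the complement of an analytic set of codimension $p$ is not in general $q$-complete in the range you would need. As written, the key implication therefore rests on an unproved (and for $p=1$ false) assertion, whereas the paper's direct computation with \eqref{galt} avoids all of this.
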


\begin{proof}
It is easy to check that (i) implies all the other conditions.
Assume that (ii) holds. 
Locally on $Z_{reg}=\{w=0\}$
we have \eqref{galt}, and by choosing
$\xi(z,w)=\psi(z)\chi(w)dw^\beta\w dz\w d\bar z$
for a suitable cutoff function $\chi$ and test functions $\psi$,
we can conclude from (ii) that $a_\beta=0$ if $|\beta|=M$.
By finite induction it follows that  $\mu=0$ there. Hence $\mu=0$ globally
by the SEP.
Clearly (iii) implies (iv). Finally, assume that (iv) holds. 
Given $\omega\supset Z$ choose
$\omega'\subset\subset\omega$  and a solution to
$\dbar w=V$ in $X\setminus \overline{\omega'}$. If we  extend $w$ arbitrarily across $\omega'$
the form $U=V-\dbar w$ is a solution to $\dbar U=\mu$ with  support in $\omega$.
In view of Lemma~\ref{stod} thus $\mu=0$.
\end{proof}

Notice that $V$ defines a Dolbeault cohomology class $\omega^\mu$ in 
$X\setminus Z$
that only depends on $\mu$, and that conditions (ii)-(iv) are statements
about this class.
For an interesting application,  fix a current $\mu\in\CH_Z$.
Then the theorem gives several equivalent ways to
express that a given $\phi\in\O$ belongs to the annihilator
ideal of $\mu$. In the case when $\mu=\mu^f$ 
for a complete intersection $f$, one gets back
the equivalent formulations of the duality
theorem from \cite{DS} and \cite{P}.

\begin{remark}
If $\mu$ is an arbitrary $(0,p)$-current with support on $Z$ and $\dbar V=\mu$
we get an analogous theorem if condition (i) is replaced by:
{\it $\mu=\dbar\gamma$ for some $\gamma$ with support on $Z$.} 
This follows from the Dickenstein-Sessa decomposition 
$\mu=\mu_{CH}+\dbar\gamma$, where $\mu_{CH}$ is in $\CH_Z$.
See \cite{DS} for the case $Z$ is a complete intersection and
\cite{Bj1} for the general case.
\end{remark}

\def\listing#1#2#3{{\sc #1}:\ {\it #2},\ #3.}


\begin{thebibliography}{9999}







\bibitem{A1}\listing{M.\ Andersson}
{Residue currents and ideals of holomorphic functions}
{Bull.\  Sci. Math., {\bf 128}, (2004), 481--512}






\bibitem{AW2}\listing{M.\ Andersson \& E.\ Wulcan}
{Decomposition of  residue currents}
{In preparation}{}{}{}{}


\bibitem{Bj1}\listing{J-E Bj\"ork}
{Residue calculus and $\D$-modules om complex manifolds}
{Preprint Stockholm (1996)}
{}{}{}
{}{}







\bibitem{Bj2}\listing{J-E Bj\"ork}
{Residues and $\mathcal D$-modules}
{The legacy of Niels Henrik Abel,  605--651, Springer, Berlin, 2004}
{}{}







\bibitem{CH}\listing{N.r.\ Coleff \& M.e.\ Herrera}
{Les courants r\'esiduels associ\'es \`a une forme m\'eromorphe}
{Lect. Notes in Math. {\bf 633},  Berlin-Heidelberg-New York (1978)}








\bibitem{DS}\listing{A.\ Dickenstein  \& C.\ Sessa}
{Canonical representatives in moderate cohomology}
{Invent. Math. {\bf 80} (1985),  417--434.}




\bibitem{P}\listing{M.\ Passare}
{Residues, currents, and their relation to ideals of holomorphic functions}
{Math.\  Scand.\   {\bf 62}  (1988),  75--152}









\bibitem{PT}\listing{M.\ Passare \& A.\ Tsikh}
{Residue integrals and their Mellin transforms}
{Canad. J. Math. {\bf 47}  (1995),  1037--1050}
{}{}


\bibitem{PTY}\listing{M.\ Passare \& A.\ Tsikh \&  A.\ Yger}
{Residue currents of the Bochner-Martinelli type}
{Publ.\ Mat.  {\bf 44} (2000), 85-117}

\bibitem{HS}\listing{H.\ Samuelsson}
{Regularizations of products of residue and principal value currents}
{J.\  Funct.\  Anal.  {\bf 239}  (2006),  566--593}







\end{thebibliography}
\end{document}